\newcommand{\R}{\mathbf{R}}
\newcommand{\pr}{\mathbf{P}}
\newcommand{\ex}{\mathbf{E}}
\theoremstyle{plain}
\newtheorem{theorem}{Theorem}
\newtheorem{lemma}{Lemma}
\newtheorem{proposition}{Proposition}
\theoremstyle{definition}
\newtheorem{remark}{Remark}
\theoremstyle{remark}
\newcommand{\formula}[2][nolabel]
{\ifthenelse{\equal{#1}{nolabel}}
 {\begin{align*} #2 \end{align*}}
 {\ifthenelse{\equal{#1}{}}
  {\begin{align} #2 \end{align}}
  {\begin{align} \label{#1} #2 \end{align}}
 }
}
\numberwithin{equation}{section}
\begin{document}

%

\title[Heat kernel estimates for the Bessel differential operator in half-line]{Heat kernel estimates for the Bessel differential operator in half-line}
\author{Kamil Bogus, Jacek Ma{\l}ecki}
\address{Kamil Bogus, Jacek Ma{\l}ecki \\ Faculty of Fundamental Problems of Technology \\ Department of Mathematics\\ Wroc{\l}aw University of Technology \\ ul.
Wybrze{\.z}e Wyspia{\'n}\-skiego 27 \\ 50-370 Wroc{\l}aw,
Poland
}
\email{kamil.bogus@pwr.edu.pl, jacek.malecki@pwr.edu.pl}

\keywords{Bessel differential operator, heat kernel, sharp estimate, half-line, Bessel process, transition probability density}
\subjclass[2010]{35K08, 60J60}

\thanks{The project was funded by the National Science Centre grant no. 2013/11/D/ST1/02622.}

\begin{abstract} In the paper we consider the Bessel differential operator $L^{(\mu)}=\dfrac{d^2}{dx^2}+\dfrac{2\mu+1}{x}\dfrac{d}{dx}$ in half-line $(a,\infty)$, $a>0$, and its Dirichlet heat kernel $p_a^{(\mu)}(t,x,y)$. For $\mu=0$, by combining analytical and probabilistic methods, we provide sharp two-sided estimates of the heat kernel for the whole range of the space parameters $x,y>a$ and every $t>0$, which complements the recent results given in \cite{BogusMalecki:2014}, where the case $\mu\neq 0$ was considered.
\end{abstract}

\maketitle

\section{Introduction}
\label{section:introduction}
We consider the Bessel differential operator 
\formula{
  L^{(\mu)} = \dfrac{d^2}{dx^2}+\dfrac{2\mu+1}{x}\dfrac{d}{dx}\/,\quad \mu\in\R\/,
}
on the half-lines $(a,\infty)$, for every $a\geq 0$. Let us denote by $p_a^{(\mu)}(t,x,y)$ the corresponding Dirichlet heat kernel considered with respect to the reference measure $m^{(\mu)}(dy) = y^{2\mu+1}dy$. For $\mu=0$ we will simply write $p_a(t,x,y)$ and we will also omit superscripts in the notation of the other objects related to that case. The explicit formula for $p^{(\mu)}(t,x,y) := p^{(\mu)}_0(t,x,y)$ in terms of the modified Bessel functions is well-known
   \formula[eq:global:formula]
      {
p^{(\mu)}(t,x,y) = \frac1t (xy)^{-\mu}
\exp\left(-\frac{x^2+y^2}{2t}\right)I_{|\mu|}\left(\frac{xy}{t}\right)\/,\quad
x,y,t> 0\/.}
 For $a>0$ and $\mu\neq 0$ the sharp two-sided estimates of $p_a^{(\mu)}(t,x,y)$ of the form
 \formula[eq:munonzero:result]{
      p_a^{(\mu)}(t,x,y)\stackrel{\mu}{\approx} \left[1\wedge \frac{(x-a)(y-a)}{t}\right]\left(1\wedge \frac{xy}{t}\right)^{|\mu|-\frac{1}{2}} \frac{1}{(xy)^{\mu+1/2}}\frac{1}{\sqrt{t}}\exp\left(-\frac{(x-y)^2}{2t}\right)\/,
      }
for every $x,y>a$ and $t>0$, were obtained recently in \cite{BogusMalecki:2014}. Here $\stackrel{\mu}{\approx}$ means that the ratio of the functions on the left-hand side and the right-hand side is bounded from below and above by positive constants (depending only on $\mu$) for the indicated range of variables. Notice that the exponential behavior is here described explicitly, i.e. there are no different constants in the exponential terms in the lower and upper bounds. This is also the case with the main result of the present paper, where the two-sided sharp estimates of $p_1(t,x,y)$ (for $\mu=0$) for the whole range of the space and time parameters are provided in 
\begin{theorem}
\label{thm:main}
   We have
	\formula[eq:result:small]{
	   p_1(t,x,y)\approx\ln x\ln y\left(\ln\frac{3t}{x+\sqrt{t}}\ln\frac{3t}{y+\sqrt{t}}\right)^{-1}\frac{1}{t}\exp\left(-\frac{x^2+y^2}{2t}\right)\/,\quad xy\leq t\/,
	}
	 and 
	\formula[eq:result:large]{
	   p_1(t,x,y)\approx \left(1\wedge \frac{(x-1)(y-1)}{t}\right)\frac{1}{\sqrt{xyt}}\exp\left(-\frac{(x-y)^2}{2t}\right)\/,\quad xy>t\/,
	}
	for every $x,y>1$ and $t>0$.
\end{theorem}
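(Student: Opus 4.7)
My starting point is Hunt's formula: since the $2$-dimensional Bessel process $R_t$ associated with $L^{(0)}$ is recurrent and exits $(1,\infty)$ only through the point $\{1\}$, one has
\formula{
   p_1(t,x,y) = p(t,x,y) - \int_0^t q_x(s)\, p(t-s, 1, y)\, ds\/,
}
where $p(t,x,y)$ is given by \eqref{eq:global:formula} with $\mu=0$ and $q_x(s)$ denotes the density under $\pr_x$ of the first hitting time $\tau_1$ of $1$. The Laplace transform $\ex_x[e^{-\lambda \tau_1}]=K_0(x\sqrt{2\lambda})/K_0(\sqrt{2\lambda})$, together with the asymptotics $K_0(z)\sim -\ln z$ as $z\to 0^+$ and $K_0(z)\sim e^{-z}\sqrt{\pi/(2z)}$ as $z\to\infty$, already suggests the structure of the answer: the small-argument logarithms of $K_0$ are responsible for the prefactor in \eqref{eq:result:small} (the critical, recurrent regime $xy\leq t$), while the exponential decay of $K_0$ at infinity matches the Gaussian regime $xy>t$ of \eqref{eq:result:large}.

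The first step I would carry out is to derive sharp two-sided bounds for $q_x(s)$ in all regimes defined by the relative sizes of $s$, $x^2$ and $1$, aiming for the form
\formula{
 q_x(s)\approx \frac{x-1}{s^{3/2}}\,f(x,s)\,\exp\!\left(-\frac{(x-1)^2}{2s}\right)\/,
}
where the correction $f(x,s)$ encodes the $K_0$-logarithms and, for $s\gg x^2\vee 1$, reduces to $f(x,s)\approx \ln x/(\ln s)^2$, reflecting the $2$-dimensional recurrence. Combined with the elementary bounds $I_0(z)\approx 1$ for $z\leq 1$ and $I_0(z)\approx e^z/\sqrt z$ for $z\geq 1$, this also yields sharp estimates on the free kernel $p(t-s,1,y)$ appearing in the integrand.

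In the regime $xy>t$ covered by \eqref{eq:result:large}, the argument of $I_0$ in \eqref{eq:global:formula} is large, so $p(t,x,y)\approx (2\pi txy)^{-1/2}\exp(-(x-y)^2/(2t))$. The boundary factor $1\wedge(x-1)(y-1)/t$ is then extracted from the Hunt formula by splitting the $s$-integration according to the standard cutoffs $s\leq (x-1)^2$, $(x-1)^2\leq s\leq (y-1)^2$, and $s$ of order $t$, along the lines of the proof of \eqref{eq:munonzero:result} in \cite{BogusMalecki:2014}. No logarithms appear here because all the relevant $K_0$-arguments stay bounded away from $0$.

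The regime $xy\leq t$ is the main obstacle. Here $I_0(xy/t)\approx 1$ and $p(t,x,y)$ is of the same order as the integral subtracted in the Hunt formula, so \eqref{eq:result:small} can be read off only after a delicate cancellation. My plan is to bypass this difficulty by reinterpreting the ratio $p_1(t,x,y)/p(t,x,y)$ as the probability that a Bessel bridge of length $t$ from $x$ to $y$ stays above the level $1$, and splitting this bridge at time $t/2$ via the Markov property; this should factor the bridge survival probability as a product of two single-point hitting-time probabilities, which after precise asymptotic analysis will produce $\ln x/\ln(3t/(x+\sqrt t))$ for the factor depending on $x$ and the symmetric quantity for $y$. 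Patching the logarithmic asymptotics across the four subregimes corresponding to each of $x$ and $y$ being less than or greater than $\sqrt t$ is where the most delicate bookkeeping will occur, and the matching lower bound should follow from a standard chaining argument that forces the bridge to spend the middle portion of its time in a well-chosen annulus around $\sqrt t$.
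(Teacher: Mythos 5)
Your starting point (Hunt's formula driven by sharp hitting--time estimates, with the $K_0$ logarithms producing the prefactor) is the same as the paper's, and your outline for $xy>t$ is workable in spirit, although the paper actually obtains the lower bound there from the absolute continuity relation with the index-$1/2$ Bessel process and the upper bound from an explicit evaluation of the subtracted integral (a $K_{1/2}$ computation); this is how it keeps the constant $\tfrac12$ in the Gaussian factor exact, which a generic splitting of the $s$-integral tends to lose. A small but real error in your setup: for $s\gg x^2\vee 1$ the hitting density behaves like $\tfrac{x-1}{x}\,\tfrac{\ln x}{s(\ln s)^2}\,e^{-(x-1)^2/2s}$ (see \eqref{eq:q:behaviour:2}), i.e.\ like $1/s$ up to logarithms, not like $s^{-3/2}$ as in the form you propose.

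The genuine gap is in the regime $xy\le t$. The bridge survival probability does not factor when you split at $t/2$: the Markov property gives an integral over the midpoint $w$ of $p_1(t/2,x,w)\,p_1(t/2,w,y)$, so each factor still contains the unknown killed kernel at an arbitrary intermediate point and the argument is circular. The paper breaks this circularity with two separate devices in Proposition~\ref{prop:xy<t:y^2<t}: a three-fold Chapman--Kolmogorov splitting whose middle kernel is bounded by the trivial $1/t$ (upper bound), and a restriction of $w$ to an annulus of radius $\asymp\sqrt t$ combined with the auxiliary one-sided estimate \eqref{eq:low:midstep}, itself proved from Hunt's formula via a monotonicity property of $x\mapsto p(t,x,w)$ (lower bound). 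More seriously, your plan offers nothing concrete for the asymmetric subcase $xy\le t$ but $y^2\gg t$, where the answer carries $\ln(3t/y)$ rather than $\ln(3t)$. Your annulus/chaining idea fails outright there: forcing the bridge near $\sqrt t$ at time $t/2$ costs a factor of order $e^{-y^2/2t}$ relative to $p(t,x,y)$, which is exponentially lossy. In the paper this subcase is the technical core (Proposition~\ref{prop:xy<t:y^2>t}): the leading terms of $p(t,x,y)$ and of $\int_0^t q_{x,1}(s)p(t-s,1,y)\,ds$ cancel, and the estimate is extracted from the difference $p(t,1,y)-p(t-s,1,y)$ via a three-way decomposition of the $s$-integral at $2x$ and $4t^2/y^2$, using the mean value theorem and two-sided bounds on $\partial_x p(t,x,y)$. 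That cancellation analysis, which you acknowledge but propose to bypass, cannot in fact be bypassed by the bridge argument, so the proof as sketched does not go through.
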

 
Using the scaling property 
\formula{
  p_a(t,x,y) = \frac1a p_1(t/a^2,x/a,y/a)\/,\quad x,y>a\/,\quad t>0\/,
}
one can immediately deduce the corresponding result for any $a>0$. Moreover, the result can be rewritten in one formula covering both cases, i.e. $xy/t$ small and $xy/t$ large. More precisely, taking into account the asymptotic behavior of $I_0(z)$ at zero and infinity (see (\ref{eq:I:asym:infty})), we can write the estimate in the following way
\formula{
  \frac{p_a(t,x,y)}{p(t,x,y)}\approx 1\wedge \left[\ln (x/a)\ln (y/a) \left(\ln\frac{3xy+3t}{ax+a\sqrt{t}}\ln\frac{3xy+3t}{ay+a\sqrt{t}}\right)^{-1}\left(1+\frac{xy}{t}\right)\right]\/,
}
for every $x,y>a$ and $t>0$. Note that the constants appearing in the estimate are absolute and do not even depend on $a$. Notice that for $xy>t$ (and $a=1$) the estimates in (\ref{eq:munonzero:result}) are exactly of the same form as those given in (\ref{eq:result:large}). However, for $xy<t$ the logarithmic terms appear in (\ref{eq:result:small}), which is not the case in (\ref{eq:munonzero:result}). Moreover, the logarithmic behaviour of $p_a(t,x,y)$ require more delicate methods, i.e. those used in \cite{BogusMalecki:2014} can not be applied to prove (\ref{eq:result:large}).

Since exploring the behaviour of the heat kernels is very important in many areas of Mathematics (for example in differential equations, harmonic analysis, potential theory or theory of stochastic processes), the literature abounds with a huge amount of different kinds of deep and beautiful results on heat kernels estimates, which are difficult to briefly describe. However, even if we deal with the most classical case, i.e. the heat equation based on Laplacian in $\R^n$ (see \cite{Zhang:2002} or the works of E.B. Davies \cite{DaviesSimon:1984}, \cite{Davies:1987}, \cite{Davies:1990}, \cite{Davies:1991}) or we consider the corresponding problem on manifolds (see for example \cite{SC:2010} and the references within) or finally, we look at the resent results on the Fourier-Bessel heat kernel estimates on $(0,1)$ studied in \cite{NowakRoncal:2014a} and \cite{NowakRoncal:2014b}, the overwhelming majority of the results are qualitatively sharp, i.e. the constants appearing in the exponential terms in the upper and lower bounds are different. The presented result given in Theorem \ref{thm:main}, as well as the result of \cite{BogusMalecki:2014}, is free from this defect and here the exponential behaviour is described explicitly. However, this requirement imposes more complicated form of the part describing non-exponential behaviour of $p(t,x,y)$. We refer Reader to the Introduction in \cite{BogusMalecki:2014}, where more detailed discussion on this phenomenon was conducted.

The paper is organized as follows. The basic properties of the Bessel function $I_0(z)$ as well as the notation related to Bessel processes are recalled in Preliminaries. Moreover, in Lemma~\ref{lem:pt:derivative}, we introduce some technical estimates of a derivative of the heat kernel on $(0,\infty)$, which will be useful in the sequel. Then, in Section~\ref{sec:xyt:small}, we provide the proof of formula (\ref{eq:result:small}) which is divided into two parts. The first one relates to the case, when $x^2/t$ and $y^2/t$ are bounded (Proposition~\ref{prop:xy<t:y^2<t}) and in Proposition~\ref{prop:xy<t:y^2>t} we deal with the case when $(x\vee y)^2/t$ is large. Finally, in Section~\ref{sec:xyt:large} we justify (\ref{eq:result:large}).


\section{Preliminaries}


\label{section:preliminaries}
\subsection{Modified Bessel functions}
We begin with collecting some selected properties of the modified Bessel function $I_0(z)$ used in the sequel. We start with the following definition
\formula{
   I_0(z)=\sum_{n=0}^{\infty}\frac{1}{(n!)^2}\left(\frac{z}{2}\right)^{2n}\/,\quad z\in \R\/.
}
The above given series representation immediately implies that $I_0(z)$ is an increasing function on $[0,\infty)$. On the other side, the function $e^z I_0(z)$ is decreasing on $[0,\infty)$, i.e. we have (see Thm. 2.1 in \cite{Laforgia:1991}) 
\formula[eq:I:inequality]{
   \frac{I_0(y)}{I_0(x)}\leq e^{y-x}\/,\quad y>x>0\/.
}
Obviously $I_0(0)=1$ and the behavior at infinity is described by (see \cite{Erdelyi:1953:volII} 7.13.1 (5))
\formula[eq:I:asym:infty]{
  I_0(z) &\sim \frac{e^z}{\sqrt{2\pi z}}\left(1+O(1/z)\right)\/,\quad z\to\infty\/.
}
Finally, we recall the estimates of the ratio of $I_0(z)$ and its derivative $I_1(z)$ given in \cite{Nasell:1978}
\formula[eq:I:ratio:bound]{
  \frac{z}{z+2}\leq\frac{I_1(z)}{I_0(z)}\leq\frac{2z}{2z+1}, \quad z>0\/.
}

\subsection{Bessel processes}
We write $\pr_x^{(\mu)}$ and $\ex^{(\mu)}_x$ for the probability law and the corresponding expected value of the Bessel process $R_t^{(\mu)}$ with index $\mu\geq 0$ on the canonical path space with starting point $R_0^{(\mu)}=x>0$. The filtration of the coordinate process is denoted by $\mathcal{F}_t^{(\mu)}=\sigma\{R_s^{(\mu)}:s\leq t\}$. The transition probability density (with respect to the reference measure $m(dy)=ydy$) of two dimensional Bessel process ($\mu=0$) is given by
\formula[eq:transitiondensity:formula]{
p(t,x,y) = \frac{1}{t} 
\exp\left(-\frac{x^2+y^2}{2t}\right)I_{0}\left(\frac{xy}{t}\right)\/,\quad
x,y,t> 0\/.
}
The laws of Bessel processes with different indices are absolutely continuous. In particular, the corresponding Radon-Nikodym derivative for the laws of the Bessel processes with index $0$ and $1/2$ is described by
\formula[ac:formula]{
\left.\frac{d\pr^{(0)}_x}{d\pr^{(1/2)}_x}\right|_{\mathcal{F}_t^{(1/2)}}=\left(\frac{w(t)}{x}\right)^{-1/2}\exp\left(\frac{1}{8}\int_{0}^{t}\frac{ds}{w^{2}(s)}\right)\/,
}
where $x>0$, and the above given formula holds $\pr^{(1/2)}_x$-a.s..

Continuity of the paths and the Hunt formula enable us to write the corresponding transition density function for the process $R_t^{(0)}$ killed when it leaves a half-line $(a,\infty)$, $a>0$, in the following way
\formula[eq:hunt]{
  p_a(t,x,y) = p(t,x,y)-\int_0^t q_{x,a}(s)p(t-s,1,y)ds\/,\quad x,y>a\/,\quad t>0\/.
}
Here $q_{x,a}(s)$ denotes the density function of $T_a$, the first hitting time  of a level $a$ by the process starting from $x>a$. Its uniform sharp estimates were provided in \cite{BMR3:2013} (see Theorem $2$ therein). More precisely, we have
\formula[eq:q:behaviour:1]{
q_{x,1}(s)\approx \frac{x-1}{\sqrt{x}}\frac{1}{s^{3/2}}\exp\left({-\frac{(x-1)^2}{2s}}\right)\/,\quad s<2x\/,
}
and 
\formula[eq:q:behaviour:2]{
q_{x,1}(s)\approx\frac{x-1}{x}\frac{1+\ln x}{(1+\ln(s+x))(1+\ln(1+s/x))}\frac1s \exp\left({-\frac{(x-1)^2}{2s}}\right)\/, \quad s\geq2x\/.
}
These estimates lead to  the uniform bounds of the survival probabilities (see Theorem $10$ in \cite{BMR3:2013})
\formula[eq:q:survival:probability]{
   \int_{t}^{\infty}q_{x,1}(s)\,ds \approx 1 \wedge \frac{\ln x}{\ln (1+\sqrt{t})}\/, \quad x>1, t>0\/.
}
Note also that 
\formula[eq:p:integration]{
   \int_{1}^\infty p_1(t,x,y)\,m(dy) = \ex_x[T_1>t] = \int_{t}^\infty q_{x,1}(s)\,ds\/.
}

We end this section with the following very useful lemma, which refers to the uniform estimates of the derivative in $x$ of $p(t,x,y)$.




\begin{lemma}
\label{lem:pt:derivative}
We have
\formula{
\frac{\partial}{\partial x}p(t,x,y)\approx p(t,x,y)  \cdot x\left(\frac{y}{t}\right)^{2}\/,
}
whenever $xy\leq t $ and $y^2\geq 4t$.
\end{lemma}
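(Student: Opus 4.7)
The plan is to differentiate the closed-form expression (\ref{eq:transitiondensity:formula}) directly. Using $I_0'=I_1$, a short computation gives
\formula{
   \frac{\partial}{\partial x}p(t,x,y) = p(t,x,y)\left[\frac{y}{t}\cdot\frac{I_1(xy/t)}{I_0(xy/t)} - \frac{x}{t}\right],
}
so the claim reduces to showing that the bracketed expression is comparable to $xy^2/t^2$ under the two hypotheses $xy\leq t$ and $y^2\geq 4t$.

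Set $z=xy/t$. Since $xy\leq t$ we have $z\leq 1$, and the ratio estimate (\ref{eq:I:ratio:bound}) then yields
\formula{
   \frac{z}{3} \leq \frac{z}{z+2} \leq \frac{I_1(z)}{I_0(z)} \leq \frac{2z}{2z+1} \leq 2z,
}
so $I_1(z)/I_0(z)\approx z$ with absolute constants. Multiplying by $y/t$ gives $\frac{y}{t}\cdot\frac{I_1(xy/t)}{I_0(xy/t)}\approx \frac{xy^2}{t^2}$, again with absolute constants.

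Finally, the hypothesis $y^2\geq 4t$ is what controls the correction $-x/t$: the ratio of the principal term $xy^2/t^2$ to the correction $x/t$ is $y^2/t\geq 4$, so the correction subtracts at most one quarter of the lower bound established in the previous step. Consequently the bracketed expression stays positive and remains comparable to $xy^2/t^2$, which proves the lemma. The argument is a direct calculation from the explicit representation; the only nontrivial ingredient is the ratio inequality (\ref{eq:I:ratio:bound}), and the role of the assumption $y^2\geq 4t$ is precisely to prevent the subtraction of $x/t$ from spoiling the lower bound on the derivative.
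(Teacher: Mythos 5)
Your proof is correct and follows essentially the same route as the paper: differentiate the explicit kernel, apply the ratio bounds \eqref{eq:I:ratio:bound}, and use $xy\le t$ to control the denominators and $y^2\ge 4t$ to absorb the subtracted term $x/t$. One wording slip worth fixing: the correction $x/t$ is at most one quarter of the \emph{principal term} $xy^2/t^2$, not of the lower bound $\tfrac13\,xy^2/t^2$, so the surviving margin is $(\tfrac13-\tfrac14)\,xy^2/t^2=\tfrac1{12}\,xy^2/t^2$ --- exactly the constant the paper's proof obtains.
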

\begin{proof}
Differentiating the right-hand side of (\ref{eq:transitiondensity:formula}) and using $I_0'(z)=I_1(z)$ lead to 
\formula{
\frac{\partial}{\partial x}p(t,x,y)=p(t,x,y)\cdot \frac{1}{t}\left(y\frac{I_1(xy/t)}{I_0(xy/t)}-x\right)\/,
}
for every $x,y>0$ and $t>0$. The upper bounds in \eqref{eq:I:ratio:bound} gives 
\formula{
y\frac{I_1(xy/t)}{I_0(xy/t)}-x\leq y\cdot \frac{2xy}{2xy+t}-x\leq \frac{2xy^2}{t}\/.
}
On the other-side, using the lower bounds in \eqref{eq:I:ratio:bound} and the conditions $xy\leq t$, $y^2\geq 4t$, we get
\formula{
y\frac{I_1(xy/t)}{I_0(xy/t)}-x\geq \frac{xy^2}{xy+2t}-x \geq \frac{xy^2}{3t}-x\geq \frac1{12} \cdot \frac{xy^2}{t}\/.
}
This ends the proof. 
\end{proof}

\begin{remark}
\label{rem:pt:derivative}
If we assume that $2x<y$ and $y^2\geq 4t$ (namely, we replace the condition $xy\leq t$ by $2x<y$) we can write, using lower bound given in \eqref{eq:I:ratio:bound}, that 
\formula{
y\frac{I_1(xy/t)}{I_0(xy/t)}-x\geq \frac{xy^2}{xy+2t}-x = \frac{x(y^2-xy-2t)}{xy+2t}> \frac{x(y^2/2-2t)}{xy+2t}\geq 0
}
and, as a consequence, we obtain that the function $x\mapsto p(t,x,y)$ is increasing for $y^2\geq 4t$ and $1<x<y/2$.
\end{remark}


\section{Estimates for $xy/t$ small}
\label{sec:xyt:small}
By symmetry, it is enough to consider $1<x<y$. Moreover, in this section we only deal with the case $xy<t$. It implies that $x^2\leq xy<t$ and consequently (\ref{eq:result:small}) reads as 
\formula{
  p_1(t,x,y)\approx\frac{\ln x \ln y}{\ln^2 3t}\,p(t,x,y)\/,\qquad y^2\leq t\/,
}
and 
\formula{
   p_1(t,x,y)\approx\frac{\ln x}{\ln (3t/y)}\,p(t,x,y)\/,\qquad y^2>t\/.
}
It naturally splits the proof into to parts related to the case when $y^2/t$ is small (Proposition \ref{prop:xy<t:y^2<t}) and when $y^2/t$ is large (Proposition \ref{prop:xy<t:y^2>t}).  

\begin{proposition}
\label{prop:xy<t:y^2<t} 
For every $m\geq 2$ we have
\formula{
  p_1(t,x,y)\stackrel{m}{\approx} \frac{\ln x \ln y}{\ln^2(3t)}\,p(t,x,y)\/,
}
whenever $xy<t$ and $y^2\leq mt$.
\end{proposition}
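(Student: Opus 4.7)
Assume $1<x\leq y$ by symmetry. Since $(x^2+y^2)/(2t)\leq m$ and $xy/t\leq 1$, both $\exp(-(x^2+y^2)/(2t))\stackrel{m}{\approx}1$ and $I_0(xy/t)\in[1,I_0(1)]$ (by the series definition together with (\ref{eq:I:asym:infty})); hence $p(t,x,y)\stackrel{m}{\approx}1/t$. Together with $y+\sqrt t\stackrel{m}{\approx}\sqrt t$ and analogously for $x$, this reduces the claim to
\formula{
  p_1(t,x,y)\stackrel{m}{\approx}\frac{\ln x\,\ln y}{t\,\ln^2(3t)}\/,
}
which, by (\ref{eq:q:survival:probability}), equals $t^{-1}\pr_x[T_1>t]\,\pr_y[T_1>t]$ up to $m$-dependent constants. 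This factorization — bulk density $1/t$ times independent survivals at the two endpoints — is a strong structural hint.

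The idea is to apply Hunt's formula (\ref{eq:hunt}) at both endpoints. The first application gives
\formula{
  p_1(t,x,y)=p(t,x,y)-\int_0^t q_{x,1}(s)\,p(t-s,1,y)\,ds\/.
}
Using symmetry $p(t-s,1,y)=p(t-s,y,1)$ and that the Dirichlet kernel vanishes at the boundary, $p_1(t-s,y,1)=0$, a second application of (\ref{eq:hunt}) yields
\formula{
  p(t-s,1,y)=\int_0^{t-s}q_{y,1}(u)\,p(t-s-u,1,1)\,du\/.
}
Exploiting recurrence of the two-dimensional Bessel process, $\int_0^\infty q_{x,1}(s)\,ds=\int_0^\infty q_{y,1}(u)\,du=1$, one rewrites $p(t,x,y)=p(t,x,y)\iint q_{x,1}(s)q_{y,1}(u)\,du\,ds$ and obtains the decomposition
\formula{
  p_1(t,x,y)=p(t,x,y)\,\pr\bigl[T_1^x+T_1^y>t\bigr]+\iint_{s+u\leq t}q_{x,1}(s)q_{y,1}(u)\bigl[p(t,x,y)-p(t-s-u,1,1)\bigr]du\,ds\/,
}
with independent hitting times $T_1^x, T_1^y$.

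The first summand is of order $p(t,x,y)\,\pr[T_1^x+T_1^y>t]\stackrel{m}{\approx}(\max(\ln x,\ln y)/\ln(3t))(1/t)$ by (\ref{eq:q:survival:probability}), which \emph{exceeds} the target. The extra factor $\min(\ln x,\ln y)/\ln(3t)$ must therefore arise from cancellation with the second summand. The integrand $p(t,x,y)-p(t-s-u,1,1)$ is negative near $s+u=t$ (since $p(\varepsilon,1,1)\sim1/\sqrt{2\pi\varepsilon}$ as $\varepsilon\to 0$ by (\ref{eq:I:asym:infty})), and of controlled size for $s+u$ away from $t$. Making this rigorous requires splitting the $(s,u)$-integration according to the four sub-regions determined by (\ref{eq:q:behaviour:1}) and (\ref{eq:q:behaviour:2}) for each of $q_{x,1}$, $q_{y,1}$, and quantifying the cancellation in each. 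Lemma~\ref{lem:pt:derivative} does not apply here (it requires $y^2\geq 4t$), so $p(t,x,y)-p(t-s-u,1,1)$ must be expanded directly using the series for $I_0(z)$ and the exponential factor.

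The main obstacle is precisely this cancellation argument: one must compare an averaged $p(t-s-u,1,1)$ against $p(t,x,y)$ sharply enough to reduce $\max(\ln x,\ln y)/\ln(3t)$ down to the product $\ln x\ln y/\ln^2(3t)$, uniformly in the four $(s,u)$-sub-regions and with no loss in the exponential. Inputs beyond the Hunt-formula decomposition that are likely to be needed are the monotonicity in (\ref{eq:I:inequality}), the ratio bounds (\ref{eq:I:ratio:bound}), and the tail estimate (\ref{eq:q:survival:probability}), plus explicit handling of the endpoint $s+u\to t$ via the asymptotic (\ref{eq:I:asym:infty}).
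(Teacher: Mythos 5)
Your reduction to $p_1(t,x,y)\stackrel{m}{\approx}\ln x\ln y/(t\ln^2(3t))$ is correct, and the double application of Hunt's formula is a valid identity. But the argument stops exactly where the proposition begins: you observe that the first summand $p(t,x,y)\,\pr[T_1^x+T_1^y>t]\approx \max(\ln x,\ln y)/(t\ln 3t)$ already \emph{exceeds} the target, so the signed second summand must cancel it down to the product $\ln x\ln y/(t\ln^2 3t)$ --- and you then leave this cancellation as the ``main obstacle'' without carrying it out. This is not a technicality; it is the entire content of the proposition. In the regime $x\to 1^+$, $y\asymp\sqrt t$, the target is smaller than your first summand by the unbounded factor $\ln(3t)/\ln x$, so the two summands must cancel almost completely, and a two-sided bound on $p_1$ then requires controlling the difference of two nearly equal quantities from above \emph{and} below, including near the singular edge $s+u\to t$ where $p(t-s-u,1,1)\sim(2\pi(t-s-u))^{-1/2}$. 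Nothing in the plan indicates how the factorized product $\ln x\ln y$ would emerge from this near-cancellation, and the proposed four-region splitting of the $(s,u)$-integral does not obviously produce it.

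The paper avoids cancellation altogether. For the upper bound it applies Chapman--Kolmogorov over three time intervals of length $t/3$, bounds the middle kernel by $1/t$, and then each outer integral $\int_1^\infty p_1(t/3,\cdot,w)\,m(dw)=\int_{t/3}^\infty q_{\cdot,1}(s)\,ds$ is a survival probability, so \eqref{eq:q:survival:probability} yields the product $\frac{\ln x}{\ln 3t}\cdot\frac{\ln y}{\ln 3t}$ directly. For the lower bound it first establishes the one-endpoint estimate $p_1(t,x,w)\geq c_m\frac{\ln x}{\ln 3t}\,p(t,x,w)$ for $w>1+2\sqrt t$ and $x<w/2$ (a one-sided Hunt-formula argument using the monotonicity of Remark~\ref{rem:pt:derivative}), and then runs Chapman--Kolmogorov over two intervals of length $t/2$ restricted to intermediate points $w\in(2\sqrt{mt},3\sqrt{mt})$, applying that estimate at each endpoint. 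You should adopt this factorize-then-estimate strategy: the independence of the two boundary effects is produced by inserting an intermediate point away from the boundary, not by quantifying cancellation in a double first-passage decomposition.
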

\begin{proof}
  Taking $x\to 0^+$ in (\ref{eq:I:inequality}) we get that $I_0(z)\leq e^z$ for every $z>0$, which together with the obvious general estimate $p_1(t,x,y)\leq p(t,x,y)$, see \eqref{eq:hunt}, gives
	\formula{
	  p_1(t,x,y)\leq \frac{1}{t}\exp\left(-\frac{x^2+y^2}{2t}\right)I_0(xy/t)\leq \frac{1}{t}\/,\quad x,y>1\/, t>0\/.
	}
	Applying this estimate to the middle term in the Chapman-Kolmogorov equation given below we arrive at
	\formula{
	   p_1(t,x,y) &= \int_1^\infty \int_1^\infty p_1(t/3,x,w)p_1(t/3,w,z)p_1(t/3,z,y)m(dw)m(dz)\\
		&\leq \frac{3}{t} \int_1^\infty p_1(t/3,x,w)\,m(dw) \int_1^\infty p_1(t/3,y,z)\,m(dz)\\
		&= \frac{3}{t} \int_{t/3}^\infty q_{x,1}(s)ds\,\int_{t/3}^\infty q_{y,1}(s)ds \stackrel{m}{\approx} \frac{\ln x}{\ln 3t}\frac{\ln y}{\ln 3t}\, p(t,x,y)\/, 
	}
	where the last line follows from the fact that $p(t,x,y)\stackrel{m}{\approx} 1/t$, whenever $xy\leq y^2\leq mt$, and (\ref{eq:q:survival:probability}) (note that $t>xy>1$ and consequently $\ln(1+\sqrt{t/3})\approx \ln3t$).
	
	To deal with the lower bounds we begin with showing that there exists a constant $c_m>0$ such that
	\formula[eq:low:midstep]
	{
	 p_1(t,x,w)\geq c_m \frac{\ln x}{\ln (3t)}\,p(t,x,w)
	}
	whenever $1<x<w/2$ and $w>1+2\sqrt{t}$. Indeed, denoting the subtrahend in \eqref{eq:hunt} as $r_1(t,x,y)$, we can write
\formula{
\frac{p_1(t,x,w)}{p(t,x,w)}=1-\frac{r_1(t,x,w)}{p(t,1,w)}\frac{p(t,1,w)}{p(t,x,w)},
}
where using (\ref{eq:I:inequality}) we have 
\formula{
\frac{r_1 (t,x,w)}{p(t,1,w)}&=\int_{0}^{t}q_{x,1}(s)\frac{t}{t-s}\,\exp{\left(-\frac{w^2 +1}{2}\left(\frac{1}{t-s}-\frac{1}{t}\right)\right)}\frac{I_{0}\left(w/(t-s)\right)}{I_{0}\left(w/t\right)}\,ds\\
&\leq\int_{0}^{t}q_{x,1}(s)\frac{t}{t-s}\exp{\left(-\frac{(w-1)^2}{2}\left(\frac{1}{t-s}-\frac{1}{t}\right)\right)\,ds}\/.
}
Consequently, using monotonicity of the function $u \mapsto u^{-1}\exp{\left(-{(w-1)^2}/{2u}\right)}$ for $w>1+2\sqrt{t}$ and $0<u<t$, we get
\formula{
\frac{r_1 (t,x,w)}{p(t,1,w)}\leq\int_{0}^{t}q_{x,1}(s)ds.
}
On the other hand, the monotonicity of $x\to p(t,x,w)$ showed in Remark \ref{rem:pt:derivative}
gives $p(t,1,w)\leq p(t,x,w)$. Hence, using \eqref{eq:q:survival:probability}, we obtain 
\formula{
\frac{p_1(t,x,w)}{p(t,x,w)}\geq 1-\int_{0}^{t}q_{x,1}(s)ds=\int_{t}^{\infty}q_{x,1}(s)ds\approx  \frac{\ln x}{\ln 3t}.
}
	Finally, the Chapman-Kolmogorov equation and (\ref{eq:low:midstep}) give
	\formula{
	   p_1(t,x,y) &\geq \int_{2\sqrt{mt}}^{3\sqrt{mt}}p_1(t/2,x,w)p_1(t/2,y,w)\,m(dw)\\
		&\geq c_m\frac{\ln x\ln y}{\ln^2 (3t)}  \int_{2\sqrt{mt}}^{3\sqrt{mt}} p(t/2,x,w)p(t/2,y,w)\,m(dw)
	}
	Since $I_0(z)\geq I_0(0)=1$, we get
	\formula{
	  p_1(t,x,y)&\geq  c_m \frac{\ln x\ln y}{\ln^2 (3t)} \frac{1}{t^2}\exp\left(-\frac{x^2+y^2}{t}\right)  \int_{2\sqrt{mt}}^{3\sqrt{mt}}  \exp\left(-\frac{2w^2}{t}\right)\,m(dw)\\
		&\stackrel{m}{\approx} \frac{\ln x\ln y}{\ln^2 (3t)}\frac{1}{t^2}\int_{2\sqrt{mt}}^{3\sqrt{mt}} m(dw) \stackrel{m}{\approx} \frac{\ln x\ln y}{\ln^2 (3t)}\,p(t,x,y)\/,
	}
	where the last estimate once again follows from $p(t,x,y)\stackrel{m}{\approx} 1/t$, which is true whenever $xy\leq y^2<mt$. The proof is complete.
\end{proof}
%

\begin{proposition}
\label{prop:xy<t:y^2>t}
  We have
\formula{
   p_1(t,x,y)\approx \frac{\ln x}{\ln(3t/y)}\,p(t,x,y) \/,
}
whenever $xy<t$ and $y^2\geq 16t$.
\end{proposition}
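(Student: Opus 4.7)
I would follow the blueprint of Proposition~\ref{prop:xy<t:y^2<t} but take the Chapman--Kolmogorov split at the shorter time $u_0:=(t/y)^2$. By the hypothesis $y^2\ge 16t$ one has $u_0\le t/16$, hence $t-u_0\approx t$, while \eqref{eq:q:survival:probability} gives
\formula{
\int_{u_0}^\infty q_{x,1}(s)\,ds \;\approx\; \frac{\ln x}{\ln(1+\sqrt{u_0})} \;=\; \frac{\ln x}{\ln(1+t/y)} \;\approx\; \frac{\ln x}{\ln(3t/y)},
}
so the target factor is (up to constants) the survival probability of the process at level $1$ on the short time scale $u_0$.

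\textbf{Lower bound.} I would write, by Chapman--Kolmogorov,
\formula{
p_1(t,x,y) \;\ge\; \int_A p_1(u_0,x,w)\, p_1(t-u_0,w,y)\, m(dw),
}
where $A=[\alpha t/y,\beta t/y]$ for absolute constants $1<\alpha<\beta$ chosen so that (i) the hypotheses $xw<u_0$ (which uses $x\le t/y=\sqrt{u_0}$) and $w^2\le m u_0$ of Proposition~\ref{prop:xy<t:y^2<t} hold for $p_1(u_0,x,w)$, and (ii) the hypotheses $1<w<y/2$ and $y>1+2\sqrt{t-u_0}$ of the intermediate estimate~\eqref{eq:low:midstep} hold (both consequences of $y^2\ge 16t$). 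On $A$ one has $\ln w\approx\ln(3t/y)\approx\ln(3u_0)$, so the two lower bounds combine to produce an integrand bounded below by a constant times $(\ln x/\ln(3t/y))\,p(u_0,x,w)\,p(t-u_0,w,y)$. Integration over $A$, combined with the unkilled Chapman--Kolmogorov identity $\int p(u_0,x,w)p(t-u_0,w,y)m(dw)=p(t,x,y)$ and the fact that this integrand is Gaussian-concentrated near $w\approx t/y$ with width $\approx\sqrt{u_0}$, yields the desired lower bound.

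\textbf{Upper bound.} I would start from the trivial consequence of Chapman--Kolmogorov,
\formula{
p_1(t,x,y) \;\le\; \int_1^\infty p_1(u_0,x,w)\, p(t-u_0,w,y)\, m(dw).
}
On the ``good'' $w$-range $B=(1,\sqrt{m u_0}]$ (for $m$ large), Proposition~\ref{prop:xy<t:y^2<t} supplies the matching upper bound $p_1(u_0,x,w)\le C(\ln x\,\ln w/\ln^2(3u_0))\,p(u_0,x,w)$; bounding $\ln w\le C\ln(3u_0)$ and integrating reduces the contribution to a fraction of the unkilled Chapman--Kolmogorov integral, giving at most $C(\ln x/\ln(3t/y))\,p(t,x,y)$. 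On the complement $B^c=(\sqrt{mu_0},\infty)$ one uses $p_1\le p$ together with the Gaussian decay $p(u_0,x,w)\le C u_0^{-1}\exp(-(w-x)^2/(2u_0))$ (a consequence of \eqref{eq:I:inequality}) and Remark~\ref{rem:pt:derivative}, to show the tail is negligible compared to the main term.

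\textbf{Main obstacle.} The delicate point is the control of the $B^c$-tail in the upper bound: a raw Gaussian estimate produces only an $m$-dependent absolute constant $e^{-cm}$, which does \emph{not} vanish with $\ln x/\ln(3t/y)$ as $x\to 1^+$. Obtaining the correct asymptotic dependence likely requires either taking $m$ sufficiently large (and then pairing with a refined estimate of the Gaussian profile of $p(u_0,x,w)p(t-u_0,w,y)$) or a finer analysis using the absolute continuity formula~\eqref{ac:formula} together with the already-established estimates~\eqref{eq:munonzero:result} for $p_1^{(1/2)}$ (which cover precisely the regime $w^2\gg u_0$). A secondary subtlety arises in the sub-regime $t/y\approx 1$ (i.e., $y$ close to $t$), where $\ln w$, $\ln(3u_0)$ and $\ln(3t/y)$ are all comparable bounded constants; there the target reduces to $p_1(t,x,y)\approx p(t,x,y)$ and follows by direct inspection of \eqref{eq:hunt} combined with Remark~\ref{rem:pt:derivative}.
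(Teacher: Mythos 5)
Your plan (Chapman--Kolmogorov at the short time scale $u_0=(t/y)^2$) is genuinely different from the paper's proof, which works directly with Hunt's formula \eqref{eq:hunt}, writes $p_1/p$ as a derivative term plus the survival probability plus $\int_0^t\frac{p(t,1,y)-p(t-s,1,y)}{p(t,1,y)}q_{x,1}(s)\,ds$, and estimates the latter on the three ranges $(0,2x)$, $(2x,4t^2/y^2)$, $(4t^2/y^2,t)$ via \eqref{eq:q:behaviour:1}--\eqref{eq:q:behaviour:2}. Your heuristic for the time scale is the right one, but the execution has a concrete error in the lower bound: applying \eqref{eq:low:midstep} to the second factor gives $p_1(t-u_0,w,y)\geq c\,\frac{\ln w}{\ln(3(t-u_0))}\,p(t-u_0,w,y)$, and since $t-u_0\approx t$ while $\ln w\approx\ln(3u_0)\approx\ln(3t/y)$, the product of your two bounds is $\frac{\ln x\,(\ln w)^2}{\ln^2(3u_0)\,\ln(3(t-u_0))}\approx\frac{\ln x}{\ln(3t)}$, not $\frac{\ln x}{\ln(3t/y)}$ as you claim. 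The deficit $\ln(3t/y)/\ln(3t)$ is not bounded below (take $y$ close to $t$), so the lower bound genuinely falls short of the target. What you actually need for the second factor is $p_1(t-u_0,w,y)\gtrsim p(t-u_0,w,y)$ with no logarithmic loss; this is available from Proposition \ref{prop:lowerbounds:xytlarge}, since on your set $A$ one has $wy\geq\alpha t>t-u_0$ and $(w-1)(y-1)\gtrsim t-u_0$ once $\alpha\geq 3$ --- but \eqref{eq:low:midstep} is the wrong tool and, as written, the argument does not close.

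The upper bound is also not closed, as you acknowledge: the $B^c$-tail contributes an absolute constant multiple of $p(t,x,y)$, which does not vanish as $x\to1^+$, whereas the target does. Neither of your proposed repairs is routine: the absolute continuity formula \eqref{ac:formula} bounds the Radon--Nikodym density on the survival event only by $e^{u_0/8}$ from above, which is useless since $u_0=(t/y)^2$ can be arbitrarily large, and \eqref{eq:munonzero:result} concerns $\mu\neq0$, so transferring an upper bound to $\mu=0$ runs into the same obstruction. This tail is exactly the hard point that the paper's direct computation of $\int_0^t\frac{p(t,1,y)-p(t-s,1,y)}{p(t,1,y)}q_{x,1}(s)\,ds$ circumvents. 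Finally, your treatment of the sub-regime $t/y\approx1$ is incorrect: there $1<x<t/y$ forces $x\to1$, so $\ln x\to0$ while $\ln(3t/y)\approx\ln3$, and the target is $p_1\approx\ln x\cdot p$, which still degenerates at the boundary; it does not reduce to $p_1\approx p$. (A smaller issue of the same flavour: the hypothesis $xw<u_0$ of Proposition \ref{prop:xy<t:y^2<t} fails on parts of $A$ and $B$ when $x$ is close to $t/y$, so an additional adjustment of $u_0$ or of the ranges is needed.)
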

\begin{proof}
Note that for $y<52$ the conclusion follows from Proposition \ref{prop:xy<t:y^2<t} with $m=52^2$. Thus from now we will additionally assume that $y>52$. The second simplification comes from the fact that $p(t,x,y)\approx p(t,1,y)$ whenever $xy<t$. Taking that into consideration and using \eqref{eq:hunt}, we can write
\formula{
 \frac{p_1(t,x,y)}{p(t,x,y)}\approx \frac{p(t,x,y)-p(t,1,y)}{p(t,1,y)}+\int_t^\infty q_{x,1}(s)ds+\int_0^t \frac{p(t,1,y)-p(t-s,1,y)}{p(t,1,y)} q_{x,1}(s)\,ds\/.
}
To deal with the first part, we use the Mean-value theorem and Lemma \ref{lem:pt:derivative} to show that there exists $1<x_c<x$ such that for $xy<t$ and  $y^2>16t>4t$ we have
\formula{
p(t,x,y)-p(t,1,y)=(x-1)\cdot \left.\frac{\partial}{\partial x}p(t,x,y)\right|_{x=x_c}\approx  x_c(x-1)\frac{y^2}{t^2}p(t,x_c,y).
}
In particular, it implies that the difference is positive. Moreover, since $p(t,x_c,y)\approx p(t,1,y)$, there exists constant $c_1>0$ such that
\formula[eq:ptdifference]{
p(t,x,y)-p(t,1,y)\leq c_1\cdot x(x-1)\,\frac{y^2}{t^2}\, p(t,1,y).
}

The estimates of the second part are given in \eqref{eq:q:survival:probability}. Finally, we split the last integral into three parts
\formula{
\left[\int_0^{2x}+\int_{2x}^{\frac{4t^2}{y^2}}+\int_{\frac{4t^2}{y^2}}^{t}\right]\frac{p(t,1,y)-p(t-s,1,y)}{p(t,1,y)}q_{x,1}(s)ds = A_1+A_2+A_3
}
and estimate each of them separately by examine the behavior of the ration appearing under the integrals for the corresponding range of parameter of integration. More precisely, using the inequality $e^{-z}\geq 1-z$ and the fact that $I_0(z)\approx 1$ near zero we get the existence of the positive constant $c_2$ such that for every $s\leq 4t^2/y^2\leq t/4$ we have
\formula{
\frac{p(t,1,y)-p(t-s,1,y)}{p(t,1,y)} &=1-\frac{t}{t-s}\exp\left(-\frac{(1+y^2)s}{2t(t-s)}\right)\frac{I_0(y/(t-s))}{I_0(y/t)}\\
&\leq 1-\frac{t}{t-s}\left(1-\frac{(1+y^2)s}{2t(t-s)}\right)\frac{I_0(y/(t-s))}{I_0(y/t)}\\
& = 1-\frac{t}{t-s}\frac{I_0(y/(t-s))}{I_0(y/t)}+\frac{(1+y^2)s}{2(t-s)^2}\frac{I_0(y/(t-s))}{I_0(y/t)}\leq c_2\frac{y^2}{t^2}s\/.
}
Note that the difference of the first two components is negative and has been omitted in the last estimate. Since $s\leq 4t^2/y^2\leq t/4$, we have $\frac{y^2 s}{2t(t-s)}\leq \frac{y^2 \cdot (4t^2/y^2)}{2t(t-t/4)}= 8/3$. Thus, the inequality $e^{-z}\leq 1-z/3$, which is true for $0\leq z\leq 8/3$, gives 
\formula{
  \frac{p(t,1,y)-p(t-s,1,y)}{p(t,1,y)} &\geq 1-\frac{t}{t-s}\frac{I_0(y/(t-s))}{I_0(y/t)}+\frac{y^2s}{6t(t-s)}\frac{I_0(y/(t-s))}{I_0(y/t)}\/.
}
Observe that $\frac{ys}{t(t-s)}\leq \frac{y}{3t}\leq 1/3$ whenever $s\leq t/4$. Thus, 
using (\ref{eq:I:inequality}) and $e^{z}\leq 1+6z/5$ for $z\in(0,1/3)$, we obtain
\formula{
   \frac{t}{t-s}\frac{I_0(y/(t-s))}{I_0(y/t)}-1 &\leq \frac{t}{t-s}\left(\exp\left(\frac{ys}{t(t-s)}\right)-1\right)+\frac{s}{t-s}\\
	&\leq \frac{6ys}{5(t-s)^2}+\frac{s}{t-4t^2/y^2}\leq \frac{y^2}{t^2}s\cdot \left(\frac{32}{15y}+\frac{t}{y^2-4t}\right) \/.
	}
Moreover, the monotonicity of $I_0(z)$ gives
\formula{
  \frac{p(t,1,y)-p(t-s,1,y)}{p(t,1,y)} &\geq -\frac{y^2}{t^2}s\cdot \left(\frac{32}{15y}+\frac{t}{y^2-4t}\right)+\frac{y^2s}{6t(t-s)}\\
	&\geq\frac{y^2}{t^2}s\left(\frac{1}{6}-\frac{32}{15y}-\frac{t}{y^2-4t}\right)> \frac{s}{156} \frac{y^2}{t^2}\/,
}
since $y> 52$ and $y^2-4t\geq 12t$. Hence we have showed that
\formula{
\frac{p(t,1,y)-p(t-s,1,y)}{p(t,1,y)}\approx s \frac{y^2}{t^2}
}
whenever $s\leq 4t^2/y^2$. Thus, taking into consideration the estimates of $q_{x,1}(s)$ for $s\leq 2x$ (see \eqref{eq:q:behaviour:1}), we can write
\formula{
A_1 &\approx (x-1) \frac{y^2}{t^2}\int_0^{2x}\frac{1}{\sqrt{s}}\exp\left(-\frac{(x-1)^2}{2s}\right)\,ds
 =\frac{(x-1)^2}{\sqrt{2}}\frac{y^2}{t^2}\int_{\frac{(x-1)^2}{4x}}^{\infty}u^{-3/2}e^{-u}du\\
&=\frac{(x-1)^2}{\sqrt{2}}\frac{y^2}{t^2}\Gamma{\left(-1/2,\frac{(x-1)^2}{4x}\right)}.
}
where $\Gamma(\alpha,z)$ denotes the incomplete Gamma function. Since $\Gamma(a,z)\approx z^a$ as $z\to 0^+$ and $\Gamma{(a,z)}\approx z^{a-1}\exp{(-z)}$ if $z\rightarrow \infty$ we obtain the following  
\formula[eq:A1]{
   A_1\approx \frac{y^2}{t^2}\left((x-1)\wedge \sqrt{x}\exp\left(-\frac{(x-1)^2}{4x}\right)\right)\/.
}
On the other side, the estimates of $q_{x,1}(s)$ for $s\geq 2x$ \eqref{eq:q:behaviour:2} gives
\formula[eq:A2:midstep]{
A_2\approx \ln x \cdot \frac{y^2}{t^2}\cdot \int_{2x}^{4t^2/y^2}\frac{1}{\ln(s)\ln(s/x)}\exp{\left(-\frac{(x-1)^2}{2s}\right)}ds
}
and obviously
\formula{
  \int_{2t^2/y^2}^{4t^2/y^2}\frac{1}{\ln s\ln(s/x)}\exp{\left(-\frac{(x-1)^2}{2s}\right)}ds\geq \frac{2t^2}{y^2}\frac{\exp\left(-\frac{(x-1)^2y^2}{4t^2}\right)}{\ln (4t^2/y^2) \ln (4t^2/(xy^2))}\geq \frac{e^{-1/4}}{2}\frac{t^2}{y^2\ln^2(3t/y)}\/.
}
In a similar way, dropping the exponential term in the integral in (\ref{eq:A2:midstep}) and estimating the logarithms by its values at $2t^2/y^2$ lead to 
\formula{
\int_{2t^2/y^2}^{4t^2/y^2}\frac{ds}{\ln(s)\ln(s/x)} \leq 2\int_{2t^2/y^2}^{4t^2/y^2}\frac{ds}{\ln^2s} \leq \frac{4t^2}{y^2}\frac{1}{\ln^2(2t^2/y^2)}\approx \frac{t^2}{y^2}\frac{1}{\ln(3t/y)}\/,
}
where the first inequality follows from $s^2/x^2\geq 2t^2/(x^2y^2)s\geq s$. Collecting all together we have just obtained that
\formula[eq:A2]{
A_2\approx\frac{\ln x}{\ln^2 (3t/y)}\/.
}
Finally, to deal with $A_3$, observe that (\ref{eq:I:inequality}) gives
\formula[eq:A3:midstep]{
1\geq \frac{p(t,1,y)-p(t-s,1,y)}{p(t,1,y)} &\geq 1-\frac{t}{t-s}\exp\left(-\frac{(y-1)^2}{2t}\frac{s}{t-s}\right)\/.
}
If $s\in(4t^2/y^2,t/2)$ then $t/(t-s)\leq 2$ and $\frac{(y-1)^2}{2t}\frac{s}{t-s}\geq \frac{y^2}{4t}\frac{s}{t-s}\geq 1$ (since $y\geq 52$). Consequently, the right-hand side of (\ref{eq:A3:midstep}) is bounded from below by $1-2e^{-1}>0$. On the other hand, since $\frac{s}{t-s}\geq \frac{t}{2(t-s)}$ for $s\geq t/2$ and the function 
\formula{
  s \mapsto \frac{t}{t-s}\exp\left(-\frac{(y-1)^2}{4t}\frac{t}{t-s}\right)
}
is decreasing on $(t/2,t)$, we get the lower-bounds of the form
\formula{
  \frac{p(t,1,y)-p(t-s,1,y)}{p(t,1,y)} &\geq 1-2\exp\left(-\frac{(y-1)^2}{2t}\right)\geq 1-2e^{-1}>0\/,\quad s\in (t/2,t)\/.
}
This together with the estimates of $q_{x,1}(s)$ for large $s$ give
\formula{
  A_3 \approx \int_{4t^2/y^2}^{t}q_x(s)ds \approx
\ln x \int_{4t^2/y^2}^{t}\frac{1}{s}\frac{1}{\ln(s+x)\ln(1+s/x)}\exp{\left(-\frac{(x-1)^2}{2s}\right)}ds\/.
}
and since $2s\geq s+x\geq s$, $1+s/x\geq s/x\geq \sqrt{s}$ and   $(x-1)^2/(2s)\leq(x-1)^2 t^2/(8y^2)\leq 1/8$ on $(4t^2/y^2, t)$ we have
\formula[eq:A3]{
A_3 \approx \ln x \int_{4t^2/y^2}^{t}\frac{ds}{s\ln^2 s}=\ln x\left(\frac{1}{\ln(4t^2/y^2)}-\frac{1}{\ln (2t)}\right)\/.
}
Now, by compare \eqref{eq:A1} and \eqref{eq:A2}, we can observe that $A_2$ dominates $A_1$.

Hence combining together \eqref{eq:q:survival:probability}, \eqref{eq:A2} and \eqref{eq:A3} we obtain
  \formula{
\int_t^\infty q_{x,1}(s)ds +   A_1+A_2+A_3\approx  \ln x\left(\frac{1}{\ln (2t)}+\frac{1}{\ln^2(3t/y)}+ \frac{1}{\ln(3t/y)}-\frac1{\ln (2t)}\right)\approx \frac{\ln x}{\ln(3t/y)}\/,
}
where the last estimates holds since $xy/t\leq 1$ and consequently $1/\ln(3t/y)$ dominates $1/\ln^2(3t/y)$. Finally, since (see \eqref{eq:ptdifference})
\formula{
  0<\frac{p(t,x,y)-p(t,1,y)}{p(t,1,y)}\leq c_1 \cdot x(x-1)\frac{y^2}{t^2}\/,
}
this part is also dominated by $\ln x/\ln(3t/y)$ and the proof is complete.
\end{proof}


\section{Estimates for $xy/t$ large}
\label{sec:xyt:large}

In this Section we focus on proving (\ref{eq:result:large}). The main ideas of the proofs in this case come from \cite{BogusMalecki:2014}, but for convenience of the Reader we adapt the arguments to the case $\mu=0$ and present them in condensed form in the following two proposition related to the lower and upper bounds respectively. Similarly as in the previous section, we assume that $y>x>1$, but now we consider $xy\geq t$.

\begin{proposition}\label{prop:lowerbounds:xytlarge}
There exist constant $C_1>0$ such that for every $x,y>1$ and $t>0$ we have
   \formula[eq:up:low]{
      p_1(t,x,y)\geq C_1 \left(1\wedge \frac{(x-1)(y-1)}{t}\right)\frac{1}{\sqrt{xyt}}\exp\left(-\frac{(x-y)^2}{2t}\right),}
whenever $xy\geq t.$
\end{proposition}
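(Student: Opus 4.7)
My plan follows the two-regime approach of \cite{BogusMalecki:2014}. I would split according to whether $(x-1)(y-1)\geq t$ (Case A, interior: the target reduces to $p_1(t,x,y)\gtrsim p(t,x,y)$) or $(x-1)(y-1)<t$ (Case B, boundary: the full correction $(x-1)(y-1)/t$ must be extracted).

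In Case A I would start from the Hunt formula \eqref{eq:hunt}, so the statement becomes
$$ \int_0^t q_{x,1}(s)\,\frac{p(t-s,1,y)}{p(t,x,y)}\,ds \leq 1-C_1. $$
Applying \eqref{eq:I:inequality} to the $I_0$ ratios inside $p(t-s,1,y)/p(t,x,y)$ reduces the question to controlling $\exp\bigl(\tfrac{(x-y)^2}{2t}-\tfrac{(y-1)^2}{2(t-s)}\bigr)$ integrated against $q_{x,1}(s)$. At $s=0$ a direct algebraic manipulation gives the factor $\exp(-(x-1)(2y-x-1)/(2t))\leq e^{-(x-1)(y-1)/(2t)}$, which is a small constant in Case A, and for $s>0$ one gains the additional Gaussian penalty $-\tfrac{(y-1)^2 s}{2t(t-s)}$ of scale $t^2/(y-1)^2$. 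Splitting the integration at $s=2x$ and $s=t/2$ and invoking \eqref{eq:q:behaviour:1}--\eqref{eq:q:behaviour:2} together with the survival bound \eqref{eq:q:survival:probability} delivers the desired estimate.

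For Case B I would use the Chapman--Kolmogorov lower bound
$$ p_1(t,x,y)\geq \int_I p_1(t/2,x,w)\,p_1(t/2,w,y)\,m(dw), $$
over a short interval $I$ at distance of order $\sqrt{t}\wedge(y-1)$ from $1$, chosen so that the combined Gaussian factors $e^{-(x-w)^2/t}e^{-(w-y)^2/t}$ reproduce $e^{-(x-y)^2/(2t)}$ up to absolute constants. For $w\in I$ either $(x-1)(w-1)\geq t/2$ (so the Case A estimate already gives $p_1(t/2,x,w)\gtrsim p(t/2,x,w)$) or $x-1\ll\sqrt{t}$, in which case a nested Hunt-formula expansion -- exploiting that $w$ is at distance $\sim\sqrt{t}$ from $1$ and that $q_{x,1}$ concentrates on times $s\sim(x-1)^2\ll t$ -- yields $p_1(t/2,x,w)\gtrsim \tfrac{x-1}{\sqrt{t}}\,p(t/2,x,w)$. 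The symmetric argument on the $y$-side supplies the factor $1\wedge\tfrac{y-1}{\sqrt{t}}$, and multiplying, integrating over $I$ and collapsing via the approximate semigroup identity for the free kernel restores $p(t,x,y)$ with the overall correction $(x-1)(y-1)/t$.

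The hardest step is the sub-case of Case B with both $x-1\ll\sqrt{t}$ and $y-1\ll\sqrt{t}$: capturing exactly one linear power of $(x-1)/\sqrt{t}$ from $p_1(t/2,x,w)$ (and symmetrically $(y-1)/\sqrt{t}$) without inflation by logarithmic factors. This boundary-Harnack-type step $p_1(t/2,x,w)\asymp \tfrac{x-1}{\sqrt{t}}\,p(t/2,x,w)$ requires the sharp lower asymptotics for the Hunt residual $p(t/2,x,w)-\int_0^{t/2}q_{x,1}(s)p(t/2-s,1,w)\,ds$, which I expect to establish via a mean-value expansion in $x$ combined with the monotonicity and derivative bounds provided by Lemma \ref{lem:pt:derivative} and Remark \ref{rem:pt:derivative}.
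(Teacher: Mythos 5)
Your Case~A is essentially sound, and can in fact be streamlined: applying \eqref{eq:I:inequality} inside \eqref{eq:hunt} gives
$\frac{p(t-s,1,y)}{p(t,x,y)}\le (1+u)\,e^{-\frac{(y-1)^2}{2t}u}\,e^{-\frac{(x-1)(y-1)}{2t}}$ with $u=s/(t-s)$, and since $(y-1)^2\ge (x-1)(y-1)\ge t$ in this case, the right-hand side is at most $2e^{-1}<1$ uniformly in $s$; together with $\int_0^t q_{x,1}(s)\,ds\le 1$ this already yields $p_1\ge (1-2e^{-1})p$. Be careful, though, that routing this through the two-sided bounds \eqref{eq:q:behaviour:1}--\eqref{eq:q:survival:probability}, whose implicit constants are unspecified, cannot by itself certify that the integral is below $1$. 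The genuine gap is in Case~B. First, an interval $I$ at distance $\sqrt t\wedge(y-1)$ from $1$ is incompatible with reproducing the exact Gaussian: by $\frac{(x-w)^2}{t}+\frac{(w-y)^2}{t}=\frac{(x-y)^2}{2t}+\frac{(x+y-2w)^2}{2t}$ you must place $w$ within $O(\sqrt t)$ of $(x+y)/2$, which is far from the boundary whenever $y-1\gg\sqrt t$. Second, and fatally, the assembled factor $\bigl(1\wedge\frac{x-1}{\sqrt t}\bigr)\bigl(1\wedge\frac{y-1}{\sqrt t}\bigr)$ is \emph{not} comparable to the required $1\wedge\frac{(x-1)(y-1)}{t}$: for $t=10^6$, $x=1+10^{-3}$, $y=10^6$ (so $xy\ge t$ and $(x-1)(y-1)<t$) the former is $10^{-6}$ while the latter is $10^{-3}$, and the discrepancy is unbounded over the regime $x-1\ll\sqrt t\ll y-1$. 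Because the exponent $(x-y)^2/(2t)$ must be exact, this loss cannot be hidden in the Gaussian, so your Case~B does not prove the proposition there. Moreover, the tools you invoke for the boundary-Harnack step, Lemma~\ref{lem:pt:derivative} and Remark~\ref{rem:pt:derivative}, are established only under $xy\le t$ (respectively $2x<y$) together with $y^2\ge 4t$, and are therefore unavailable in the present regime $xy\ge t$.

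The paper's own proof bypasses all of this with a change of measure: by the absolute continuity relation \eqref{ac:formula}, discarding the exponential functional (which is $\ge 1$) gives $p_1(t,x,y)\ge(xy)^{1/2}p_1^{(1/2)}(t,x,y)$, and $p_1^{(1/2)}$ is, up to the factor $(xy)^{-1}$, the explicit reflection-principle kernel of the half-line Laplacian, which already carries the factor $\bigl(1\wedge\frac{(x-1)(y-1)}{t}\bigr)\frac{1}{\sqrt t}\exp\bigl(-\frac{(x-y)^2}{2t}\bigr)$ exactly. This settles the proposition in a few lines, with no case analysis and without even using $xy\ge t$. If you prefer to salvage your route, the regime $x-1\ll\sqrt t\ll y-1$ has to be handled by a direct first-order analysis in $x$ of the cancellation between the two terms of \eqref{eq:hunt}, not by a midpoint factorization.
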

\begin{proof}
For any Borel set $A\subset(1,\infty)$, using the absolute continuity property (\ref{ac:formula}) and simply estimating the exponential term by $1$, we obtain
\formula{
\int_{A}p_{1}(t,x,y)m(dy) &=\ex_{x}^{(1/2)}\left[T_1^{(1/2)}>t,R_t \in A;
\left(\frac{R_t}{x}\right)^{-1/2}\exp\left(\frac{1}{8}\int_{0}^{t}\frac{ds}{R^{2}_s}\right)\right]\\
&\geq x^{1/2}\cdot\ex_{x}^{(1/2)}[T_1^{(1/2)}>t,R_t \in
A;(R_t)^{-1/2}]\\
&= x^{1/2}\cdot\int_A y^{-1/2}\cdot p_1^{(1/2)}(t,x,y)\,m^{(1/2)}(dy)\/.}
Since $m^{(1/2)}(dy)= y\cdot m(dy)$ it follows that
\formula[eq:munu:relation]{
p_{1}(t,x,y)\geq&(xy)^{1/2}p_{1}^{(1/2)}(t,x,y)\/.
}
Thus, we conclude by using the explicit formula for $p_{1}^{(1/2)}(t,x,y)$, which is just the Dirichlet kernel for the classical Laplacian multiplied by the factor $(xy)^{-1}$ (see for example \cite{BorodinSalminen:2002} for more details)
\formula{
p_1^{(1/2)}(t,x,y) &= \frac{1}{\sqrt{2\pi t}}\frac{1}{xy} \left(\exp{\left(-\frac{(x-y)^2}{2t}\right)}-\exp{\left(-\frac{(x+y)^2}{2t}\right)}\right)\\
&\approx\left(1\wedge\frac{(x-1)(y-1)}{t}\right)\frac{1}{xy}\frac{1}{\sqrt{t}}\exp{\left(-\frac{(x-y)^2}{2t}\right)}\/.
}
\end{proof}

\begin{proposition}
There exist constant $C_2>0$ such that for every $x,y>1$ and $t>0$ we have
\formula{
p_1(t,x,y)\leq C_2 \left(1\wedge \frac{(x-1)(y-1)}{t}\right)\frac{1}{\sqrt{xyt}}\exp\left(-\frac{(x-y)^2}{2t}\right),
}
whenever $xy\geq t.$
\end{proposition}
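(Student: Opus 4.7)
By symmetry assume $y>x>1$ and $xy\geq t$. When $(x-1)(y-1)\geq t$, the factor $1\wedge(x-1)(y-1)/t$ equals $1$ and the desired inequality reduces to $p_1(t,x,y)\leq C(xyt)^{-1/2}\exp(-(x-y)^2/(2t))$, which follows from $p_1\leq p$ together with the asymptotic $I_0(z)\sim e^z/\sqrt{2\pi z}$ from \eqref{eq:I:asym:infty} (valid because $xy/t\geq 1$). So the substantive work lies in the range $(x-1)(y-1)<t$.

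For this range I would use the bridge decomposition $p_1(t,x,y)=p(t,x,y)\,\pr_{x,y,t}^{(0)}(T_1>t)$, reducing the problem to
\begin{equation*}
\pr_{x,y,t}^{(0)}(T_1>t)\leq C\,\frac{(x-1)(y-1)}{t}.
\end{equation*}
Specializing \eqref{ac:formula} to the bridge (the factor $(R_t/x)^{-1/2}$ is deterministic given both endpoints and cancels between numerator and denominator) yields
\begin{equation*}
\pr_{x,y,t}^{(0)}(T_1>t)=\frac{\ex_{x,y,t}^{(1/2)}[\mathbf{1}_{T_1>t}\cdot\phi]}{\ex_{x,y,t}^{(1/2)}[\phi]},\qquad \phi:=\exp\Bigl(\tfrac18\textstyle\int_0^t ds/R_s^2\Bigr).
\end{equation*}
The functional $\mathbf{1}_{T_1>t}$ is non-decreasing in the path (it equals $1$ iff $\min_{[0,t]}R_s>1$), while $\phi$ is non-increasing. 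An FKG-type inequality for the $3$-dim Bessel bridge forces these to be negatively correlated, giving $\pr_{x,y,t}^{(0)}(T_1>t)\leq \pr_{x,y,t}^{(1/2)}(T_1>t)$. The latter is computed from the explicit half-line kernels $p_1^{(1/2)}$ and $p^{(1/2)}$ used in the proof of Proposition \ref{prop:lowerbounds:xytlarge}: the ratio equals $(1-e^{-2(x-1)(y-1)/t})/(1-e^{-2xy/t})$, which for $xy\geq t$ is at most $C(x-1)(y-1)/t$. Combining with $p(t,x,y)\leq C(xyt)^{-1/2}\exp(-(x-y)^2/(2t))$ from \eqref{eq:I:asym:infty} closes the estimate.

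The main technical obstacle is the rigorous negative-correlation step for the $3$-dim Bessel bridge. A clean way around it is to construct a pathwise coupling of the $2$-dim and $3$-dim Bessel bridges such that $R_s^{(0)}\leq R_s^{(1/2)}$ for all $s\in[0,t]$: one can start from the SDE representations of the Bessel processes, whose drift $(2\mu+1)/(2r)$ is monotone in $\mu$, drive both by a common Brownian motion to obtain a pathwise ordering of the unconditioned processes, and then pass to the bridge laws by disintegration along the common terminal value. This immediately yields $T_1^{(0)}\leq T_1^{(1/2)}$ and the required probability comparison without any FKG machinery, mirroring the strategy of \cite{BogusMalecki:2014} for $\mu\neq 0$, adapted here to the degenerate index $\mu=0$.
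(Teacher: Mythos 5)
Your reduction to the range $(x-1)(y-1)<t$, the treatment of the complementary case via $p_1\leq p$ and \eqref{eq:I:asym:infty}, and the computation of the ratio $p_1^{(1/2)}/p^{(1/2)}=(1-e^{-2(x-1)(y-1)/t})/(1-e^{-2xy/t})\leq C(x-1)(y-1)/t$ for $xy\geq t$ are all fine. The route you take afterwards, however, is genuinely different from the paper's and has a gap at its central step. The paper uses the absolute-continuity formula \eqref{ac:formula} only in the easy regime $t<4$, where $\frac18\int_0^t ds/R_s^2\leq t/8\leq 1/2$ on $\{T_1^{(1/2)}>t\}$; in the remaining hard regime $1<x<2$, $t\geq 4$ (where $xy\geq t$ forces $y>2$) it abandons the comparison with $\mu=1/2$ altogether and instead bounds the subtrahend $r_1(t,x,y)=\int_0^t q_{x,1}(s)p(t-s,1,y)\,ds$ in the Hunt formula \eqref{eq:hunt} from \emph{below}, via an explicit $K_{1/2}$-type integral, and then extracts the factor $(x-1)(y-1)/t$ from the difference $p-r_1$.

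Your substitute for this --- the inequality $\pr^{(0)}_{x,y,t}(T_1>t)\leq\pr^{(1/2)}_{x,y,t}(T_1>t)$ between the two- and three-dimensional Bessel \emph{bridges} --- is the step that is not justified. The FKG route requires positive association of the three-dimensional Bessel bridge law on path space; this is not a standard fact (the bridge is not Gaussian, so Pitt's theorem does not apply), and you yourself flag it as the main obstacle. The coupling you offer as a remedy does not repair it: driving the $\mu=0$ and $\mu=1/2$ SDEs by the same Brownian motion from the same starting point does give $R^{(0)}_s\leq R^{(1/2)}_s$ pathwise, but then $R^{(0)}_t<R^{(1/2)}_t$ generically, so there is no ``common terminal value'' along which to disintegrate; conditioning the two marginal laws separately on the same endpoint $y$ discards the joint law and with it the pathwise ordering. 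Stochastic domination of one-dimensional diffusions does not in general pass to their bridges, precisely because conditioning on the endpoint adds the drift $\partial_r\log p^{(\mu)}(t-s,r,y)$, whose monotonicity in $\mu$ would itself have to be proved. Until the bridge comparison (or the positive association) is established, the regime $1<x<2$, $t\geq 4$ remains open in your argument --- and this is exactly the regime where the crude bound $\int_0^t ds/R_s^2\leq t$ costs an unaffordable factor $e^{t/8}$, i.e.\ where all the actual work of the proposition lies.
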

\begin{proof}
Notice that if the space variables are bounded away from the boundary, i.e. $y>x>2$, then we have
\formula{
  \frac{(x-1)(y-1)}{t}\geq\frac{xy}{4t}>\frac{1}{4}
}
and consequently the result follows from $p_1(t,x,y)\leq p(t,x,y)$ and the asymptotic expansion of $I_0(z)$ for large arguments (see \eqref{eq:I:asym:infty}), which implies the estimates of $p(t,x,y)$ of the desired form for $xy>t$. Moreover, the result for small $t$, i.e. $t<4$, follows in a similar way as in Proposition \ref{prop:lowerbounds:xytlarge}. Indeed, since
\formula{
 \frac{1}{8} \int_0^{t}\frac{ds}{R^2_s}\leq\int_0^{t}\frac{ds}{8}\leq \frac{1}{2}
}
on $\{T_1^{(1/2)}>t\}$, using the absolute continuity property (\ref{ac:formula}), we obtain
\formula{
p_{1}(t,x,y) &\leq e^{1/2}\,(xy)^{1/2}p_{1}^{(1/2)}(t,x,y)
}
and the estimates of $p_{1}^{(1/2)}(t,x,y)$ gives the result.

Thus, it is enough to consider $1<x<2$ and $t\geq 4$. Notice that since $xy>t$, we have $y>4/x>2$. Moreover, for $0<s<t$, the following inequalities hold
\formula
{
I_{0}\left(\frac{y}{t-s}\right)\geq I_{0}\left(\frac{y}{t}\right) ,\quad \frac{1}{t-s}\geq\frac{1}{\sqrt{t}\sqrt{t-s}},
}
and consequently, we can write
\formula{
r_1(t,x,y)=\int_0^{t}q_{x,1}(s)p(t-s,1,y)ds\geq\frac{I_0(y/t)}{\sqrt{t}}\int_0^{t}\frac{1}{\sqrt{t-s}}\exp\left({-\frac{y^2 +1}{2(t-s)}}\right)q_{x,1}(s)ds\/.
}
Now using the following lower bounds for $q_{x,1}$ (see Lemma 4 in \cite{BMR3:2013})
\formula{
q_{x,1}(s)\geq \frac{x-1}{\sqrt{2\pi x}}\frac{1}{s^{3/2}}\exp\left(-{\frac{(x-1)^2}{2s}}\right), \quad s>0\/,
}
we arrive at
\formula{
r_1(t,x,y)\geq\frac{x-1}{\sqrt{2\pi x}}\frac{I_0(y/t)}{\sqrt{t}}\int_0^{t}\frac{1}{\sqrt{t-s}}\frac{1}{s^{3/2}}\exp\left(-{\frac{(x-1)^2}{2s}}\right)\exp\left({-\frac{y^2 +1}{2(t-s)}}\right)ds.
}
Substituting $w=1/s-1/t$ in the integral above, we can reduce it to 
\formula{
t^{-1/2}\exp\left(-\frac{(x-1)^2+y^2+1}{2t}\right)\int_0^{\infty}w^{-1/2}\exp\left(-w\frac{(x-1)^2}{2}\right)\exp\left(-\frac{y^2+1}{2t^2}\frac{1}{w}\right)dw.
}
One can recognize that the above-given integral appears in the integral representation of the modified Bessel function $K_{1/2}(z)=\sqrt{\pi/(2z)}e^{-z}$ and consequently, it can be computed explicitly (see for example formula 3.471.15 in \cite{GradsteinRyzhik:2007}). Finally, we get the estimates of the $r_1(t,x,y)$ in the following form
\formula[r1:bound:midstep]{
r_1(t,x,y)\geq\frac{1}{\sqrt{x}}\frac{I_0(y/t)}{t}\exp\left(-\frac{(x-1+\sqrt{y^2+1})^2}{2t}\right).
}
On the other hand, because of \eqref{eq:I:inequality}, we have
\formula[p:bound:midstep]{
p(t,x,y)\leq\frac{I_0(y/t)}{t}\exp\left(-\frac{(x-y)^2}{2t}\right)\exp\left(-\frac{y}{t}\right)
}
and consequently, from \eqref{r1:bound:midstep} and \eqref{p:bound:midstep}, we get
\formula{
p_1(t,x,y)
&\leq\frac{I_0(y/t)}{t}\exp\left(-\frac{(x-y)^2}{2t}\right)\exp\left(-\frac{y}{t}\right)
\left[1-\frac{1}{\sqrt{x}}\exp\left(-\frac{(x-1)(\sqrt{y^2+1}+y-1)}{2t}\right)\right]
}
Now observe that
\formula{
1-\frac{1}{\sqrt{x}}\exp\left(-\frac{(x-1)(\sqrt{y^2+1}+y-1)}{2t}\right)\leq\frac{x-1}{\sqrt{x}}\exp\left(-\frac{(x-1)(\sqrt{y^2+1}+y-1)}{2t}\right)\/.
}
Estimating the exponential term above by $1$ and taking into account that for $1<x<2\leq y$ and $xy\geq t$ we have
\formula{
\frac{x-1}{\sqrt{x}}=\frac{(x-1)(y-1)}{t}\frac{t}{\sqrt{x}\cdot(y-1)}\leq\frac{2}{\sqrt{x}}\cdot\frac{(x-1)(y-1)}{t}\/,
}
together with the obvious estimate
\formula{
1-\frac{1}{\sqrt{x}}\exp\left(-\frac{(x-1)(\sqrt{y^2+1}+y-1)}{2t}\right)\leq 1\/,
}
we arrive at
\formula{
p_1(t,x,y)\leq \frac{2}{\sqrt{x}}\left(1\wedge\frac{(x-1)(y-1)}{t}\right)\frac{I_0(y/t)}{t}\exp\left(-\frac{(x-y)^2}{2t}\right)\exp\left(-\frac{y}{t}\right).
}
Finally, the asymptotic description of $I_0(z)$ at infinity \eqref{eq:I:asym:infty} ensures that there exists constant $c_1>0$ such that $I_0(y/t)\leq c_1 \sqrt{t/y}\cdot e^{y/t}$ and consequently
\formula{
p_1(t,x,y)\leq 2 c_1 \cdot\frac{1}{\sqrt{xyt}}\left(1\wedge\frac{(x-1)(y-1)}{t}\right)I_0\left(\frac{xy}{t}\right)\exp\left(-\frac{(x-y)^2}{2t}\right)\/,
}
where we have additionally used the fact that $1\leq I_0(xy/t)$. Hence the proof is complete.
\end{proof}
\bibliographystyle{plain}
\bibliography{bibliography}

\end{document}